\newtheoremstyle{example}{\topsep}{\topsep}%
     {}%         Body font
     {}%         Indent amount (empty = no indent, \parindent = para indent)
     {\bfseries}% Thm head font
     {.}%        Punctuation after thm head
     {  }%     Space after thm head (\newline = linebreak)
     {}%         Thm head spec
\newtheorem{thm}{Theorem}[section]
\newtheorem*{thm*}{Theorem}
\newtheorem{defn}[thm]{Definition}
\newtheorem*{def*}{Definition}
\newtheorem{assumpt}{Assumption}[section]
\theoremstyle{example}
\newtheorem{rmk}{Remark}[section]
\newcommand{\eps}{\varepsilon}
\newcommand{\om}{\omega}
\newcommand{\Om}{\Omega}
\newcommand{\fil}{\mathcal{F}}
\newcommand{\expt}{\mathbb{E}}
\newcommand{\mbbP}{\mathbb{P}}
\newcommand{\R}{\mathbb{R}}
\newcommand{\C}{\mathcal{C}}
\newcommand{\Ch}{\hat{\C}}
\newcommand{\gen}{\mathcal{A}}
\newcommand{\dom}{\mathcal{D}}
\newcommand{\Ah}{\hat{\gen}}
\newcommand{\Th}{\hat{T}}
\newcommand{\Ind}{\mathbf{1}_{\{0\}}}
\newcommand{\la}{\langle}
\newcommand{\ra}{\rangle}
\newcommand{\pih}{\hat{\pi}}
\newcommand{\Qhl}{\hat{Q}_{\Lambda}}
\newcommand{\Psiz}{\hat{\Psi}}
\newcommand{\pj}{\Pi}
\newcommand{\icond}{\zeta}%\newcommand{\icond}{\xi}
\newcommand{\stopt}{\mathfrak{e}}
\newcommand{\zed}{\mathfrak{z}^\eps}
\newcommand{\gnoise}{\xi}
\newcommand{\nstsp}{\mathbf{M}}
\newcommand{\noisegent}{\mathfrak{G}}
\newcommand{\igen}{\mathcal{L}}
\newcommand{\cmp}{\mathfrak{X}} % canonical markov process
\newcommand{\clp}{\cmp^{\dag}}
\newcommand{\Qh}{\hat{Q}}
\newcommand{\Sp}{\mathbb{S}}
\newcommand{\teps}{\tau^\eps}
\newcommand{\nablaz}{\nabla^{\mathfrak{z}}}
\newcommand{\nablay}{\nabla^{\mathfrak{y}}}
\newcommand{\trunc}{\vartheta}
\newcommand{\zedx}{\mathfrak{z}}
\newcommand{\xiavg}{\Xi}
\newcommand{\yavg}{\mathbb{Y}}
\newcommand{\tauavg}{\mathbb{T}}
\newcommand{\zedaux}[1]{\mathfrak{z}^{\eps,#1}}
\newcommand{\sppm}{\mathcal{P}}
\title[Noise perturbations of DDE at Hopf bifurcation]{Exponentially-ergodic markovian noise perturbations of delay differential equations at Hopf bifurcation.}
\author{N. Lingala}
\author{N. Sri Namachchivaya}
\author{V. Wihstutz}
\keywords{Delay differential equations; Hopf bifurcation; Diffusion approximation; Martingale problem.}
\subjclass[2010]{34K06, 34K27, 34K33, 34K50}
\begin{document}

%#####################################################################
%#####################################################################
%#####################################################################
%#####################################################################

\begin{abstract} 
We consider noise perturbations of delay differential equations (DDE) experiencing Hopf bifurcation. The noise is assumed to be exponentially ergodic, i.e. transition density converges to stationary density exponentially fast uniformly in the initial condition. We show that, under an appropriate change of time scale, as the strength of the perturbations decreases to zero, the law of the critical eigenmodes converges to the law of a diffusion process (without delay). We prove the result only for scalar DDE. For vector-valued DDE without proofs see \cite{PRE}.
\end{abstract}

\maketitle

%#####################################################################
%######################## SECTION 1 ##################################
%#####################################################################
%#####################################################################

\section{Introduction}\label{sec:intro}
Delay differential equations (DDEs) arise in a variety of areas such as manufacturing systems, biological systems, and control systems.  Deterministic DDEs have been the focus of intense study by many authors in the past three decades---see \cite{Halebook, Diekmanbook} and the references therein. In some of these systems, variation of a parameter would result in loss of stability through Hopf bifurcation. 
For example, oscillators of the form 
\begin{align}\label{eq:osci_chat_ex}
\ddot{q}(t)+2\zeta \dot{q}(t)+q(t)=-\kappa_1 [q(t)-q(t-\tau)]+\kappa_2[q(t)-q(t-\tau)]^2
\end{align}
arise in machining processes---here $q$ represents the position of a tool cutting a workpiece that is rotating with time period $\tau$, and $\kappa_i$ depend on the width of the cut. There exists a threshold $\kappa$ beyond which the fixed point $q=0$ loses stability through Hopf bifurcation and oscillations arise \cite{Gabor1KalNag}. This oscillatory behaviour is called regenerative chatter and results in poor surface finish of the workpiece. Hopf bifurcation is also found in biological systems---for example \cite{Longtin} discusses a model for oscillations in the area of eye-pupil as a response to incident light. The model has similar qualitative behaviour to Mackey-Glass equation \cite{Mac_Gla_scholarpedia}
\begin{equation*}
\frac{d}{dt}\hat{x}(t)=-\alpha \hat{x}(t)+\frac{c\theta^n}{\theta^n+\hat{x}^n(t-\tau)},
\end{equation*}
which exhibits a Hopf bifurcation as the parameter $n$ is varied.

Typically these systems are also influenced by noise, for example, inhomogenity in the material properties of workpiece in machining processes \cite{Buck_Kusk}, and unmodeled dynamics in biological systems. Therefore, it is important to study the effect of noise in the models of such systems. The aim of this paper is to study the effect of nonlinear and random perturbations on those delay systems whose fixed points are on the verge of a Hopf bifurcation. With appropriate scaling of coordinates, the dynamics close to the fixed point can be casted in the form of a linear DDE perturbed by small noise and small nonlinearities.
First we briefly describe the mathematical set-up. Statements would be proved only for scalar DDE. 

Let $x(t)$ be a $\R$-valued process governed by a DDE with maximum delay $r$. The evolution of $x$ at each time $t$ requires the history of the process in the time interval $[t-r,t]$. So, the state space can be taken as $\C:=C([-r,0];\R)$, the space of continuous functions on $[-r,0]$. Equipped with sup norm, $||\eta||=\sup_{\theta \in [-r,0]}|\eta(\theta)|$, the space $\C$ is a Banach space. At each time $t$, denote the $[t-r,t]$ segment of $x$ as $\pj_t x$, i.e. $\pj_t x \in \C$ and
$$\pj_t x(\theta)=x(t+\theta), \quad \text{ for }\theta \in [-r,0].$$
Now, a linear DDE can be represented in the following form
\begin{align}\label{eq:detDDE}
\begin{cases}\dot{x}(t)=L_0(\pj_t x), \qquad t\geq 0, \\
\pj_0x =\icond \in \C,
\end{cases}
\end{align}
where $L_0:\C\to \R$ is a continuous linear mapping on $\C$ and $\icond$ is the initial history required. For every such $L_0$ there exists a bounded function $\mu:[-r,0]\to \R$, continuous from the left on $(-r,0)$ and normalized with $\mu(0)=0$, such that 
\begin{align}\label{eq:L0matrixrep}
L_0\eta = \int_{[-r,0]}d\mu(\theta)\eta(\theta), \quad \forall \eta \in \C.
\end{align}
To reflect the Hopf bifurcation scenario, we consider operators $L_0$ which are such that the unperturbed system \eqref{eq:detDDE} is on the verge of instability, i.e., we assume $L_0$ satisfies the following assumption.
\begin{assumpt}\label{ass:assumptondetsys}
Define $\Delta(\lambda):=\lambda I_{n\times n}-L_0(e^{\lambda \cdot})\,=\,\lambda I_{n\times n}-\int_{[-r,0]}d\mu(\theta)e^{\lambda \theta}.$ The characteristic equation 
\begin{align}\label{eq:chareq}
det(\Delta(\lambda))=0, \qquad \lambda \in \mathbb{C}
\end{align}
has a pair of purely imaginary solutions $\pm i\om_c$ (critical eigenvalues) and all other solutions have negative real parts (stable eigenvalues).
\end{assumpt}
The object of study in this article are equations of the form
\begin{align}\label{eq:detDDE_pert_gennoise_intro}
\begin{cases}dx(t)=L_0(\pj_t x)dt+\eps G_q(\pj_t x)dt+\eps^2 G(\pj_t x)dt+\eps \sigma(\xi(t)) F(\pj_t x)dt, \quad t\geq 0, \\
\pj_0x =\icond \in \C,
\end{cases}
\end{align}
where $F,G,G_q:\C \to \R$ satisfy assumption \ref{assmp:assmp_on_FGGq}, $\xi$ is a noise process satisfying assumption \ref{assmp:assmp_on_noise} and $\sigma:\nstsp\to \R$ is a bounded mean-zero function of the noise $\xi$. For example, one can have $\xi$ as a finite-state markov chain. The coefficient $G_q$ is assumed to satisfy a centering condition that would be specified later in assumption \ref{ass:assumptonGqcentering}.
\begin{assumpt}\label{assmp:assmp_on_FGGq}
The functions $F,G,G_q$ have atmost linear growth.
$$|F(\eta)|\leq C(1+||\eta||), \quad |G(\eta)|\leq C(1+||\eta||), \quad |G_q(\eta)|\leq C(1+||\eta||), \quad \forall \eta \in \C.$$
The functions $F,G,G_q$ possess three bounded derivatives. 
\end{assumpt}
\begin{assumpt}\label{assmp:assmp_on_noise}
The noise $\xi$ is a $\nstsp$-valued Markov process with the transition function $\nu$ given by
 $$\nu(t,\xi,B)=\mbbP\{\xi_t\in B \,| \,\xi_0=\xi\}$$
for $B$ a borel subset of $\nstsp$.
The noise is exponentially erogdic, i.e., there exist a unique invariant probability measure $\bar{\nu}$ and positive constants $c_1$ and $c_2$ such that for all $t\geq 0$,
$$\sup_{\xi\in \nstsp}\int_{\nstsp}|\nu(t,\xi,d\zeta)-\bar{\nu}(d\zeta)| \leq c_1e^{-c_2t}.$$
The transition semigroup is Feller with infinitesimal generator denoted by $\noisegent$.  The function $\sigma$ is bounded, $\sigma(\cdot) \in dom(\noisegent)$ and such that $\int_{\nstsp}\sigma(\zeta)\bar{\nu}(d\zeta)=0$.
\end{assumpt}
When studying the effect of small noise perturbations on DDE whose fixed point is on the verge of Hopf bifurcation, the dynamics close to the fixed point can be casted in the above forms after appropriate scaling of coordinates. For example, consider $\dot{\tilde{x}}=\kappa \tilde{x}(t-1)-\tilde{x}^3(t)$. When $\kappa=-\frac{\pi}{2}$ the fixed point $\tilde{x}=0$ is on the verge of instability. Suppose $\kappa$ has small perturbations about $-\frac{\pi}{2}$ according to $\kappa(t)=-\frac{\pi}{2} +\eps \sigma(\gnoise(t)) +\eps^2$ where $\gnoise$ is a noise. Then, zooming close to the zero fixed point, $x(t):=\eps^{-1} \tilde{x}(t)$ can be put in the form  \eqref{eq:detDDE_pert_gennoise_intro} with $L_0(\eta)=-\frac{\pi}{2}\eta(-1)$, $F(\eta)=\eta(-1)$ and $G(\eta)=-\eta^3(0)+\eta(-1)$.

When $\eps=0$ in \eqref{eq:detDDE_pert_gennoise_intro}, using spectral theory \cite{Halebook}, $\C$ can be decomposed as $\C=P\oplus Q$ where $P$ is a two-dimensional space determined by the (eigenspaces associated with the) pair of critical eigenvalues. The projections of $\pj_tx$ onto $P$ and $Q$ are uncoupled. In $Q$ the dynamics is governed by the stable eigenvalues, and hence the sup-norm of the $Q$-projection of $\pj_tx$ decays to zero exponentially fast as $t\to \infty$. The space $P$ is two-dimensional and a basis $\Phi$ can be chosen for it. Let $(z_1(t), z_2(t))$ be the coordinates of $P$-projection of $\pj_tx$ with respect to the basis $\Phi$. Then the dynamics of $z$ is a pure rotation with frequency $\om_c$ and constant amplitude. 

 When the perturbation is added, i.e. $\eps>0$, the dynamics of $\pj_tx$ on $P$ and $Q$ is coupled. The amplitude of the $Q$-projection decays exponentially fast to a ``strip'' of $O(\eps)$  and the dynamics of $z$ can be written as perturbation of a rotation. Employing a rotating coordinate system to nullify the rotation of $z$, and writing the transformed coordinates as $\zedx$ we show that, under an appropriate change of time scale, the law of $\zedx$ converges to the law of diffusion process as $\eps \to 0$. This result is useful because {\it for small $\eps$  it provides an approximate two dimensional description of countably infinite modes that the delay equation possesses.} 

{\bf The result is summarized in theorem \ref{thm:mainthmExpErgNoimainres}}. For vector-valued DDE without proofs, and an illustration of the usefulness of these results using numerical simulations, see \cite{PRE}.

%@@@@@@ RELATED WORK @@@@@@
%@@@@@@@@@@@@@@@@@@@@@@@@@@
\subsection{Related work}
Systems with small noise perturbations are studied in \cite{Khas2, Blankpap, Duke}. They consider systems of the form
$\frac{d}{d\tau}\tilde{x}^{\eps}(\tau)={\eps}F(\tilde{x}^{\eps}(\tau),\xi(\tau))+\eps^2G(\tilde{x}^{\eps}(\tau),\xi(\tau))$
with $F$ such that for each fixed $\tilde{x}$, $\expt[F(\tilde{x},\xi)]=0$ where expectation is with respect to the invariant measure of the noise $\xi$. On changing the time sacle in the above equation: $t=\eps^2\tau$, $x^{\eps}(t):=\tilde{x}^{\eps}(t/\eps^2)$, $\xi^{\eps}(t):=\xi(t/\eps^2)$, we have
$\frac{dx^{\eps}(t)}{dt}=\frac{1}{\eps}F(x^{\eps}(t),\xi^{\eps}(t))+G(x^{\eps}(t),\xi^{\eps}(t)).$
It is shown in \cite{Khas2, Blankpap, Duke} (using different assumptions) that the law of $x^\eps$ converges weakly to that of a diffusion process as $\eps \to 0$. Analogous results for DDE are in \cite{YinRama}. It considers
\begin{equation}\label{eq:YinRamaEqmain}
\dot{x}^\eps(t)=\frac{1}{\eps}b(x^\eps(t),x^\eps(t-r),\xi^\eps(t))\,+\,a(x^\eps(t),x^\eps(t-r),\xi^\eps(t))
\end{equation}
with the assumptions that $\expt[b(x,x_r,\xi(t))]=0$, $\expt[a(x,x_r,\xi(t))]=\bar{a}(x,x_r)$ $\forall\, x,x_r,$ and
$$\frac{1}{(T_2-T_1)}\int_{T_1}^{T_2}\expt\big[\sum_j\frac{\partial b_i(x,x_r,\xi(t))}{\partial x_j}\,\,b_j(x,x_r,\xi(T_1))\big]\,dt \to \bar{b}(x,x_r) \quad \text{ as } T_1,T_2, T_2-T_1 \to \infty,$$
$$\frac{1}{(T_2-T_1)}\int_{T_1}^{T_2}\expt[b_i(x,x_r,\xi(t))\,\,b_j(x,x_r,\xi(T_1))]\,dt \to \frac12 S_{ij}(x,x_r) \quad \text{ as }  T_1,T_2,T_2-T_1 \to \infty,$$
$$\exists \Phi(x,x_r) \text{ such that } \frac12(S_{ij}+S_{ij}^T)=\Phi\Phi^T.$$
Then \cite{YinRama} shows that, as $\eps \to 0$, the law of $x^\eps$ converges weakly to that of a stochastic DDE given by
$$dx(t)=\big[\bar{a}(x(t),x(t-r)) \,+\,\bar{b}(x(t),x(t-r))\big]\,dt \,\,+\,\,\Phi(x(t),x(t-r))\,dW(t).$$
Note that \eqref{eq:YinRamaEqmain} is a time-rescaled version of 
$\dot{x}(t)= \eps b(x(t),x(t-\frac{r}{\eps^2}),\xi(t))+\eps^2 a(x(t),x(t-\frac{r}{\eps^2}),\xi(t)).$
If you compare this with \eqref{eq:detDDE_pert_gennoise_intro}: here the delay is not a constant as $\eps$ varies, whereas in \eqref{eq:detDDE_pert_gennoise_intro} delay is a constant. Alternatively, in \eqref{eq:YinRamaEqmain} the delay is fixed delay $r$, whereas a time-rescaled version of \eqref{eq:detDDE_pert_gennoise_intro} would have vanishing delay $\eps^2 r$. Whereas \cite{YinRama} obtains an SDDE in the limit, we would obtain an SDE without delay.

In \cite{pap_var} the effect of noise on evolution equations on Banach spaces is considered, and \cite{PapKoh75} extends it to systems with fast and slow components. \cite{PapKoh75} considers 
\begin{align}\label{eq:PapKohcons}
\frac{dy^{\eps}(t)}{dt}=\frac{1}{\eps}By^{\eps}(t)+A(t/\eps)y^{\eps}(t),
\end{align}
with the following assumptions: (i) the operator $B$ (deterministic) generates a contraction semigroup which is denoted by $e^{tB}$, (ii) $B$ is such that $e^{tB}\to \pih$ as $t\uparrow \infty$, where $\pih$ is the projection onto the kernel of $B$, (iii) $\exists\,C,\,\gamma>0$ such that $||(e^{tB}-\pih)f||\leq Ce^{-\gamma t}||f||$. Under the assumptions (i) and (ii), we have that $e^{tB}\pih=\pih e^{tB}=\pih$ and $\pih Bf=B\pih f=0$.  Define the operator $\bar{A}$ by 
$\bar{A}=\lim_{T\uparrow \infty}\frac{1}{T}\int_{t_0}^{t_0+T}\expt[A(s)]ds.$
Write the solution of the equation \eqref{eq:PapKohcons} as $y^{\eps}(t)=U^{\eps}(t,0)y^{\eps}(0)$. \cite{PapKoh75} is concerned with the asymptotic behaviour of $U^{\eps}(t,0)$ as $\eps \downarrow 0$. Under some assumptions on $U^{\eps}(t,0)$ \cite{PapKoh75} states \\
\emph{Theorem 3.1 in \cite{PapKoh75}}: $\qquad$ For $0\leq t\leq T,$ $\lim_{\eps \downarrow 0}\expt[U^{\eps}(t,0)\pih f]=e^{t\,\pih \bar{A}\pih}f.$ \\
\emph{Theorem 3.2 in \cite{PapKoh75}: $\qquad$ Suppose $\pih\,\bar{A}\,\pih \equiv 0$. Then, for $0\leq t\leq T,$ $\lim_{\eps \downarrow 0}\expt[U^{\eps}(t/\eps,0)\pih f]=e^{t\bar{V}}\pih f,$
where $\bar{V}=\lim_{T\uparrow \infty}\frac{1}{T}\int_{t_0}^{t_0+T}\int_{t_0}^s\expt[\pih A(s)(e^{B(s-u)}-\pih)A(u)\pih]\,du \,ds.$}\\
The above result theorem 3.2 is in fact not correct. 
When the fast component is present the theorem  gives only the critical(slow)-stable(fast) interaction, but not the critical(slow)-critical(slow) interaction. In fact doing the computations in \cite{PapKoh75} carefully shows that the correct result is $$\bar{V}=\lim_{T\uparrow \infty}\frac{1}{T}\int_{t_0}^{t_0+T}\int_{t_0}^s\expt[\pih A(s)e^{B(s-u)}A(u)\pih]\,du \,ds.$$
The delay equation \eqref{eq:detDDE_pert_gennoise_intro} could be put in the framework of \cite{PapKoh75}. However it is difficult to satisfy all the assumptions and so we choose the easier route of using the martingale problem technique. The case of $\int_{\nstsp}\sigma(\zeta)\bar{\nu}(d\zeta)\neq 0$  corresponds to the case of  theorem 3.1 in \cite{PapKoh75} and $\int_{\nstsp}\sigma(\zeta)\bar{\nu}(d\zeta)= 0$  corresponds to the case of  theorem 3.2 in \cite{PapKoh75}.

\cite{Tsark1} considers equations of the form \eqref{eq:detDDE_pert_gennoise_intro} with $\int_{\nstsp}\sigma(\zeta)\bar{\nu}(d\zeta)\neq 0$ and a different $\eps$ scaling; for example: $dx(t)=L_0(\pj_t x)dt+\eps G(\pj_t x)dt + \eps \sigma(\xi(t)) F(\pj_t x)dt$.  Let $z$ be the coordinates of $P$-projection of $\pj_tx$ with respect to basis $\Phi$. Let $\zedx$ be the transformed process obtained after nullifying the rotation of $z$. \cite{Tsark1} shows that the probability law of $\zedx(t/\eps)$ converges to that of a deterministic ODE and that the norm of $Q$-projection decays exponentially fast. If the zero fixed point of the limit ODE is exponentially stable, then it is proven that $x$ is also exponentially stable in the moments.

\cite{NavWihs, NavWihs2} considers equations of the form $\dot{x}(t)=L_0(\pj_t x)+\eps\sigma(\xi(t))L_1(\pj_t x)$ with $\sigma$ a mean zero function of the noise process $\xi$ and $L_1$ is a bounded linear operator on $\C$. Define the exponential growth rate $\lambda^\eps := \limsup_{t\to \infty}\frac{1}{t}\log|x^\eps(t)|$ and expand it as
$\lambda^\eps=\lambda_0+\eps \lambda_1+\eps^2\lambda_2+\ldots$.
Using perturbation methods and Furstenberg-Khasminskii representation, \cite{NavWihs, NavWihs2} show that $\lambda_0=\lambda_1=0$ and give explicit expression for $\lambda_2$.

%@@@@@@@ ORGANIZATION @@@@@@
%@@@@@@@@@@@@@@@@@@@@@@@@@@@
\subsection{Organization of this paper} In sections  \ref{sec:unpertprob} we collect the results on spectral properties of linear DDE that would be useful to us. In section \ref{sec:pertprob} we arrive at coupled equations for the evolution of projections of $\pj_tx$ of the system \eqref{eq:detDDE_pert_gennoise_intro} on to the critical and stable eigenspaces. The sections \ref{sec:unpertprob} and \ref{sec:pertprob} are just recalling the set-up from \cite{NavWihs2} which draws from \cite{Halebook} and \cite{Diekmanbook}. In Section \ref{sec:convgTruncProb} we prove the weak convergence result following \cite{Duke}, using the technique of martingale problem. For the brevity of notation in section \ref{sec:convgTruncProb} we work with $G=G_q=0$. In section \ref{sec:GGqExpErgNoi}  we consider the effect of $G$ and $G_q$. For convenience of the reader, the final result is summarized in section \ref{sec:MainResultExpErgNoi}.

%#####################################################################
%#####################################################################
%#####################################################################
%#####################################################################
\section{The unperturbed deterministic system}\label{sec:unpertprob}
Here we are just recalling the set-up from \cite{NavWihs2} which draws from \cite{Halebook} and \cite{Diekmanbook}.

The solution of \eqref{eq:detDDE} gives rise to the strongly continuous semigroup $T(t):\C\to \C, \, t\geq 0$, defined by  
$T(t)\pj_0x=\pj_t x$. The generator $\gen$ of the semigroup is given by 
\begin{equation}\label{eq:generator_def_unpert}
\gen \varphi=\frac{d}{d\theta}\varphi, \qquad dom(\gen)=\dom(\gen)=\{\varphi \in \C^1|\varphi'(0)=L_0\varphi\}
\end{equation}
($\C^1$ is the linear space of continuously differentiable functions on $[-r, 0]$, and $'= \frac{d}{d\theta}$).  The equation \eqref{eq:detDDE} with the initial condition $\varphi$ in $\dom(\gen)$, is equivalent to the abstract differential equation
\begin{align}\label{eq:abstDE_unpert}
\frac{d}{dt}\pj_tx=\gen \,\pj_tx, \qquad t\geq 0; \quad \pj_0x=\icond \in \dom(\gen), 
\end{align}
where the differentiation with respect to $t$ is taken in the sense of the sup-norm in $\C$.

The state space $\C$ splits in the form $\C=P\oplus Q$ where $P=\text{span}_{\R}\{\Phi_1,\,\Phi_2\}$ where (recall $\pm i\om_c$ are the critical eigenvalues)
$$\Phi_1(\theta)=\cos(\om_c\theta), \qquad \Phi_2(\theta)=\sin(\om_c\theta), \qquad \theta\in[-r,0].$$
Write $\Phi=[\Phi_1,\,\Phi_2]$. Using the identity $\cos(\omega_c(t+\cdot))=\cos(\omega_ct)\cos(\omega_c\cdot)-\sin(\omega_ct)\sin(\omega_c\cdot)$ and the linearity of $L_0$, it can be shown that
\begin{align}\label{eq:TPhi_eq_PhieB}
T(t)\Phi(\cdot)=\Phi(\cdot)e^{Bt}, \qquad B=\left[\begin{array}{cc}0 & \omega_c \\ -\omega_c & 0 \end{array}\right],
\end{align}
with the derivative
\begin{equation}\label{eq:APhi=PhiB}
\gen\Phi(\cdot)=\frac{d}{dt}T(t)\Phi(\cdot)|_{t=0}=\Phi(\cdot)B.
\end{equation}

Let $\pi$ denote the projection of $\C$ onto $P$ along $Q$, i.e. $\pi:\C\to P$ with $\pi^2=\pi$ and $\pi(\eta)=0$ for $\eta\in Q$.
The operator $\pi$ can be represented using a bilinear form $\la \cdot,\cdot\ra$ on $C([-r,0],\R)\times C([0,r],\R)$ given by
\begin{equation}\label{eq:bilinform}
\la \phi,\psi \ra := \phi(0)\psi(0)-L_0(\int_0^{\cdot}\phi(u)\psi(u-\cdot)du)
\end{equation}
and functions $\Psi(\cdot)=\left[\begin{array}{c}\Psi_1(\cdot) \\ \Psi_2(\cdot) \end{array}\right]$, where $\Psi_i$ are linear combinations of $\cos(\om \cdot)$ and $\sin(\om \cdot)$ and are such that $\la \Phi_i,\Psi_j\ra=\delta_{ij}$. We have for the projection $\pi: \C \to P$, \begin{align}\label{eq:projoper_intermsof_bilform} \pi(\eta)=\Phi\la \eta,\Psi \ra = \la \eta, \Psi_1\ra \Phi_1 + \la \eta, \Psi_2\ra \Phi_2, \end{align}
and $Q=ker(\pi)=\{\eta \in \C | \pi(\eta)=0\}.$
There exists positive constants $\kappa$ and $K$ such that  
\begin{align}\label{eq:expdecesti_befext}
||T(t)\phi||\leq Ke^{-\kappa t}||\phi||, \qquad \quad \forall\, \phi \in Q. 
\end{align}

Write the solution to \eqref{eq:detDDE} (with initial condition in $dom(\gen)$) as $\pj_tx=\pi\pj_tx+(I-\pi)\pj_tx$ and define $z, y$ by $\Phi z(t)=\pi\pj_t x$ and $y_t:=(I-\pi)\pj_tx$. Here $z(t)$ is $\R^2$-valued. Then, using \eqref{eq:APhi=PhiB}, equation \eqref{eq:abstDE_unpert} can be replaced by the following system of equations
\begin{equation}\label{eq:proj_eqns_unpert}
\dot{z}(t)=Bz(t), \qquad \quad \frac{d}{dt}y_t=\gen y_t
\end{equation} 
with initial values $z(0)$ and $y_0(\cdot)$ given by $\pj_0x(\cdot)=\icond(\cdot)=\Phi(\cdot)z(0)+y_0(\cdot)$.

From \eqref{eq:proj_eqns_unpert} it can be noted that $z$ oscillate with frequency $\om_c$ and constant amplitude. From \eqref{eq:expdecesti_befext} it can be noted that $||y_t||$ decays exponentially fast.

We aim at investigating the noise perturbed system \eqref{eq:detDDE_pert_gennoise_intro}, that is, comparing the unperturbed system with the perturbed one within the same framework. But if noise is present the boundary condition \eqref{eq:generator_def_unpert}, in general, cannot be satisfied: for example, when $G=G_q=0$, the equation $\frac{d}{d\theta}\icond(0) = L_0\icond + \eps F(\icond)\sigma(\xi_0(\omega))$ cannot hold for all chance elements $\omega$. Therefore, jumps must be allowed as result
of differentiation. In other words, we need to extend the space $\C$ together with the operators $\gen$ and $T(t)$. See chapter 6, 7 of \cite{Halebook}, and \cite{Diekmanbook} for the extension. See \cite{NavWihs2} for a summary of the results that follow \cite{ChowParet}.

Let $\Ch=span_{\R}\Ind \oplus L^{\infty}$ where $L^{\infty}=L^{\infty}([-r,0],\mathcal{B}([-r,0]),Leb;\R)$. The norm on $\Ch$ is $||\gamma \Ind+\phi||=|\gamma|+||\phi||_{ess\,sup}.$ Let the extension of the semigroup $T$ to $\Ch$ be denoted by $\Th$.
The generator $\gen$ is extended to 
\begin{equation}\label{eq:Ahat}
\Ah\phi=(L_0\phi-\phi'(0))\Ind + \frac{d}{d\theta}\phi, \qquad \phi\in Lip, \qquad '\equiv\frac{d}{d\theta}
\end{equation}
where $Lip=\cup_{\alpha \in \R}Lip(\alpha)$ and $Lip(\alpha)$ is the equivalence class in $L^{\infty}$ which contain at least one Lipschitz-continuous function $\phi$ with $\phi(0)=\alpha$. 
If $\phi$ is not continuous at 0, we represent the equivalence class $[\phi]$ by a function $\phi$ with $\phi(0)=0$. Note that at any rate $(\Ah\phi)(0) = L_0\phi$. 

$\Ah$ has the same spectrum as $\gen$ and the decomposition of $\C$ is lifted to the decomposition of $\Ch$ (see \cite{Diekmanbook}, page 100) with help of the extension of $\pi$ to the projection $\pih : \Ch \to P_{\Lambda}$. Let $\Psiz=\Psi(0)$ and $\Psiz_i=\Psi_i(0)$. Explicitly, we only need
$$\pih(\gamma \Ind)=\gamma \Phi \Psiz=\gamma(\Psiz_1\Phi_1+\Psiz_2\Phi_2).$$
We have $\Ch=P\oplus \Qh$ where $\Qh=ker(\pih)$. The spaces $P$ and $\Qh$ are $\Ah$-invariant and the commutativity property $\pih\Ah=\Ah \pih$ holds on $\dom(\Ah)=Lip$.
There exists positive constants $\kappa$ and $K$ such that  
\begin{align}\label{eq:expdecesti_befext_aft}
||T(t)\phi||\leq Ke^{-\kappa t}||\phi||, \qquad \quad \forall\, \phi \in \Qh. 
\end{align}

%#####################################################################
%############################ SECTION 3 ##############################
%#####################################################################
%#####################################################################

\section{The randomly perturbed system}\label{sec:pertprob}
We are now prepared to study the randomly perturbed system \eqref{eq:detDDE_pert_gennoise_intro}. To keep the notation simple we first deal with the case $G=G_q=0$. The effect of $G$ and $G_q$ are studied in section \ref{sec:GGqExpErgNoi}. So we consider
\begin{align}\label{eq:detDDE_pert_gennoise_intro_GGqZero}
\begin{cases}dx(t)=L_0(\pj_t x)dt+\eps \sigma(\xi(t)) F(\pj_t x)dt, \quad t\geq 0, \\
\pj_0x =\icond \in Lip,
\end{cases}
\end{align}
The equation \eqref{eq:detDDE_pert_gennoise_intro_GGqZero} is equivalent to the abstract differential equation
\begin{equation}\label{eq:estabcorrespwithPapKoh}
\frac{d}{dt}\pj_tx\,\,=\,\,\Ah\,\pj_tx\,\,+\,\,\eps \sigma(\xi_t) F(\pj_tx) \Ind, \qquad \pj_0x=\icond \in Lip.
\end{equation}
Writing $\pj_t x=\Phi z(t)+y_t$ with $\Phi z(t)=\pih \pj_tx$ and $y_t:=(I-\pih)\pj_tx$, and using the facts that $\Ah(\Phi z)=\Phi Bz$, $\Ah$ commutes with $\pih$ on $\dom(\Ah)$ and $\pih\Ind=\Phi\Psiz$,  we have
\begin{eqnarray}\label{eq:proj_eqns_pert_crit}
\dot{z}(t) &=& Bz(t)+\eps \sigma(\xi_t) F(\Phi z(t)+y_t)\Psiz, \\ \label{eq:proj_eqns_pert_stable}
\frac{d}{dt}y_t &=& \Ah y_t+ \eps \sigma(\xi_t) F(\Phi z(t)+y_t)(I-\pih)\Ind,
\end{eqnarray}
with initial conditions $z(0)\in \R^2$ and $y_0\in \Qh \cap Lip$ such that $\pj_0x=\icond = \Phi z(0)+y_0$.

From \eqref{eq:proj_eqns_pert_crit} it can be noted that dynamics of $z$ is small perturbation of a rotation with frequency $\om_c$. To study the effect of perturbation itself, we need to employ a coordinate system which nullifies the rotation of $z$. Further, the noise perturbations take order $O(1/\eps^2)$ time to significantly affect the dynamics. So, we employ the following transformation:
$$\zed_t=e^{-Bt/\eps^2}z(t/\eps^2), \qquad \quad \xi^\eps_t=\xi_{t/\eps^2}, \qquad \quad y^{\eps}_t:=y_{t/\eps^2}, \qquad \quad \teps_t:=t/\eps^2,$$  
and write the evolution equations as
\begin{eqnarray}\label{eq:proj_eqns_pert_crit_alt}
\frac{d}{dt}{\zed_t} &=& \frac{1}{\eps} \sigma(\xi_t^\eps) F(\Phi e^{B\teps_t}\zed_t+y_t^\eps)e^{-B\teps_t}\Psiz, \\ \label{eq:proj_eqns_pert_stable_alt}
\frac{d}{dt}y_t^\eps &=& \frac{1}{\eps^2}\Ah y_t^\eps+ \frac{1}{\eps} \sigma(\xi_t^\eps) F(\Phi e^{B\teps_t}\zed_t+y_t^\eps)(I-\pih)\Ind,
\end{eqnarray}
The process $(\zed_t,\teps_t, y^\eps_t,\xi^\eps_t)$ is a Markov process on $\Sp:=\R^2\times (\R^+ \cup \{0\})\times (\Qhl \cap Lip) \times \nstsp$.

Our goal is to show that the probability law of $\zed$ converges to the probability law of a two-dimensional diffusion process. For the proof we use the technique of martingale problem. The procedure we employ is as follows. Consider the following truncated process:
\begin{eqnarray}\label{eq:proj_eqns_pert_crit_alt_trunc}
\frac{d}{dt}{\zedaux{n}_t} &=& \frac{\trunc_n(\zedaux{n}_t)}{\eps} \sigma(\xi_t^\eps) F(\Phi e^{B\teps_t}\zedaux{n}_t+y_t^{\eps,n})e^{-B\teps_t}\Psiz, \\ \label{eq:proj_eqns_pert_stable_alt_trunc}
\frac{d}{dt}y_t^{\eps,n} &=& \frac{1}{\eps^2}\Ah y_t^{\eps,n}+ \frac{\trunc_n(\zedaux{n}_t)}{\eps} \sigma(\xi_t^\eps) F(\Phi e^{B\teps_t}\zedaux{n}_t+y_t^{\eps,n})(I-\pih)\Ind,
\end{eqnarray}
with $\trunc_n:\R^2\to \R$ a smooth function given by
$$\trunc_n(\zedx)=\begin{cases}1 \qquad \text{ for } ||\zedx||_2\leq n,\\ 0 \qquad \text{ for } ||\zedx||_2\geq n+1.\end{cases}$$
Define the stopping time $\stopt_n := \inf\{t\geq 0\,:\,||\zed_t||_2\geq n\}$. Then the law of $\zed$ agrees with law of $\zedaux{n}$ until $\stopt_n$. We identify some drift $b^{(n)}$ and diffusion coefficient $a^{(n)}$, and show that as $\eps \to 0$, the law of the truncated processes $\zedaux{n}$ converge to the unique solution of martingale problem with diffusion and drift coefficients $(a^{(n)},b^{(n)})$. We identify $(a,b)$ so that $a\equiv a^{(n)}$, $b\equiv b^{(n)}$ on $\{\zedx\in \R^2\,:\,||\zedx||_2\leq n\}$ and show that there exists unique solution for the martingale problem of $(a,b)$. By corollary 10.1.2 and lemma 11.1.1 of \cite{MDP} we then have that the law of $\zed$ converges as $\eps\to 0$ to the law of diffusion process with diffusion and drift coefficients $(a,b)$.

In the next section we show that the law of truncated process $\zedaux{n}$ converges as $\eps\to 0$ to that of a diffusion process. The following notation would be used.
\begin{itemize}
\item For $f:\Ch \to \R$ and $\eta \in \Ch$, $\zeta \in \Ch$, let $(\zeta.\nabla) f(\eta)$ denote the Frechet derivative of $f$ at $\eta$ in the direction $\zeta$, i.e.
$$(\zeta.\nabla) f(\eta)\,:=\,\lim_{\delta\to 0}\frac{1}{\delta}\big(f(\eta+\delta \zeta)-f(\eta)\big).$$ 
\item For $f:\R^2\to \R$ such that $f\in C^1(\R^2;\R)$, and $v\in \R^2$, $$(v.\nablaz)f(z)=\lim_{\delta\to 0}\frac{1}{\delta}\big(f(z+\delta v)-f(z)\big).$$
\item For $f: (\Qh \cap Lip)\to \R$ differentiable, and $\tilde{y}\in \Qh \cap Lip$,  $$(\tilde{y}.\nablay)f(y)=\lim_{\delta\to 0}\frac{1}{\delta}\big(f(y+\delta \tilde{y})-f(y)\big).$$
\end{itemize}

%#####################################################################
%############################ SECTION 4 ##############################
%#####################################################################
%#####################################################################

\section{Convergence of the law of $\{\zedaux{n}\}_{\eps>0}$}\label{sec:convgTruncProb}
Let $N_*\in \mathbb{N}$ such that for the initial condition $\pj_0x=\icond$ both $||z(0)||_2=||\zed_0||_2<N_*$  and $||(I-\pi)\icond||=||y_0^\eps||<N_*$ holds.
We consider only $n\geq N_*$.

From \eqref{eq:proj_eqns_pert_crit_alt_trunc} it can be seen that 
\begin{align}\label{eq:ztruncbound}
||\zedaux{n}_t||\,\leq\, n+1, \quad t\geq 0.
\end{align}
Employing this fact in the variation-of-constants formula
$$y_t^{\eps,n}=\Th(\frac{t-s}{\eps^2})y_0^{\eps,n}+\int_0^t\Th(\frac{t-s}{\eps^2})(1-\pih)\Ind \sigma(\xi_s^\eps) F(\Phi e^{B\teps_s}\zedaux{n}_s+y_s^{\eps,n})ds,$$
using the exponential decay \eqref{eq:expdecesti_befext_aft} and Gronwall inequality, it can be shown that 
\begin{align}\label{eq:ytruncbound}
||y_t^{\eps,n}||\,\,\leq\,\, K||y_0||e^{-\kappa t/\eps^2}+\eps C(n+||y_0||)\,\,\leq\,\,Cn, \qquad t\geq 0.
\end{align}
Thus, once $n$ is fixed, $\zedaux{n}_t$ and $y^{\eps,n}_t$ are bounded with the bound independent of $\eps$.

The truncated process $(\zedaux{n}_t,\teps_t, y^{\eps,n}_t,\xi^\eps_t)$ is a Markov process on $$\Sp_n:=(\R^2)_{n}\times (\R^+ \cup \{0\})\times (\Qhl \cap Lip)_n \times \nstsp$$ where $$(\R^2)_{n}:=\{\zedx \in \R^2\,:\,||\zedx||_2\leq n+1\}, \qquad (\Qhl \cap Lip)_n=\{y \in \Qhl \cap Lip\,\,:\,\,||y||\leq Cn\}$$ where $C$ is from \eqref{eq:ytruncbound}.

The infinitesimal generator $\igen^\eps$ of $(\zedaux{n}_t,\teps_t, y^{\eps,n}_t,\xi^\eps_t)$ is defined as follows. Let
\begin{align*}
\igen_{0}&=\noisegent + (\Ah y).\nablay + \frac{\partial}{\partial \tau},\\
\igen_{1,1}&=\trunc_n(\zedx)\sigma(\xi) F(\Phi e^{B\tau}\zedx+y)(e^{-B\tau}\Psiz).\nablaz, \\
\igen_{1,1}&=\trunc_n(\zedx)\sigma(\xi) F(\Phi e^{B\tau}\zedx+y)((I-\pih)\Ind).\nablay
\end{align*}
Then, the infinitesimal generator $\igen^\eps$ is
\begin{equation}\label{eq:definfgen_Lteps}
\igen^{\eps}\,\,=\,\,\frac{1}{\eps^2}\igen_0+\frac{1}{\eps}\igen_{1}\,\,=\,\,\frac{1}{\eps^2}\igen_0+\frac{1}{\eps}(\igen_{1,1} +\igen_{1,2}).
\end{equation}

\begin{thm}\label{thm:apprxofgenacttestfunc}
Let $\igen^\eps$ be the generator of $(\zedaux{n}_t,\teps_t, y^{\eps,n}_t,\xi^\eps_t)$ process and let $\igen^{\dag}_n$ be as in equation \eqref{eq:def:avggendef} (more explicitly written in remark \ref{rmk:drifdiffcoeffLndag}). Then for any {$g \in C^3(\R;\R)$} with bounded derivatives, there exists functions (``correctors'') $\psi^{(k)}_g(\zedx,\tau,y,\xi)$, k=1,2, bounded on $\Sp_n$ such that
\begin{align}\label{eq:thm:apprxofgenacttestfunc}
\igen^\eps\left( g + \eps \psi^{(1)}_g + \eps^2 \psi^{(2)}_g\right) \,\,=\,\,\igen^{\dag}_ng + \eps\igen_1\psi^{(2)}_g
\end{align}
The correctors  $\psi^{(k)}_g$ are given in equations \eqref{eq:correctorpsi1eq}--\eqref{eq:correctorpsi2eq_psi21psi22}. Further, $\igen_1\psi^{(2)}_g$ is bounded on $\Sp_n$. 
\end{thm}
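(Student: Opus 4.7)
The plan is to follow the classical perturbed test function / corrector method. Writing out $\igen^\eps(g+\eps\psi^{(1)}_g+\eps^2\psi^{(2)}_g)$ and collecting powers of $\eps$ gives
\begin{align*}
\igen^\eps\bigl(g+\eps\psi^{(1)}_g+\eps^2\psi^{(2)}_g\bigr) \,=\, \frac{1}{\eps^2}\igen_0 g \,+\, \frac{1}{\eps}\bigl(\igen_0\psi^{(1)}_g+\igen_1 g\bigr) \,+\, \bigl(\igen_0\psi^{(2)}_g+\igen_1\psi^{(1)}_g\bigr) \,+\, \eps\,\igen_1\psi^{(2)}_g.
\end{align*}
Since $g=g(\zedx)$ and $\igen_0=\noisegent+(\Ah y).\nablay+\partial_\tau$ acts only on the $(\xi,y,\tau)$ variables, the $\eps^{-2}$ term vanishes. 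Matching the other orders against the desired right-hand side reduces the problem to solving the two Poisson equations
\begin{align*}
\igen_0\psi^{(1)}_g=-\igen_1 g, \qquad \igen_0\psi^{(2)}_g=\igen^{\dag}_n g-\igen_1\psi^{(1)}_g,
\end{align*}
with $\igen^{\dag}_n g$ chosen as the solvability-enforced average.

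The operator $\igen_0$ is the generator of the deterministic/stochastic flow $(\xi,y,\tau)\mapsto(\Xi_s,\Th(s)y,\tau+s)$, and I would solve the Poisson equations by the standard representation
\begin{align*}
\psi^{(1)}_g(\zedx,\tau,y,\xi)=\int_0^{\infty}\expt_{\xi}\bigl[(\igen_1 g)(\zedx,\tau+s,\Th(s)y,\Xi_s)\bigr]\,ds.
\end{align*}
Because $g$ is independent of $y$, only $\igen_{1,1}$ contributes and $\igen_1 g$ carries the factor $\sigma(\xi)$ with $\int\sigma\,d\bar{\nu}=0$. The exponential ergodicity in Assumption \ref{assmp:assmp_on_noise}, the rotational boundedness of $e^{-B(\tau+s)}\Psiz$, the linear bound on $F$ combined with the truncation $\trunc_n(\zedx)$ that confines $\zedx$ to a ball, and the $\Qh$-decay $\|\Th(s)y\|\le Ke^{-\kappa s}\|y\|$ from \eqref{eq:expdecesti_befext_aft}, together force the integrand to decay exponentially in $s$, so that $\psi^{(1)}_g$ is well defined and bounded on $\Sp_n$.

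For $\psi^{(2)}_g$, solvability of $\igen_0\psi^{(2)}_g=\igen^{\dag}_n g-\igen_1\psi^{(1)}_g$ requires the right-hand side to have zero mean along the $\igen_0$-flow; this forces the definition
\begin{align*}
\igen^{\dag}_n g(\zedx)\,:=\,\lim_{T\to\infty}\frac{1}{T}\int_0^{T}\!\int_{\nstsp}(\igen_1\psi^{(1)}_g)(\zedx,\tau,0,\xi)\,\bar{\nu}(d\xi)\,d\tau,
\end{align*}
after which the same integral formula produces a bounded $\psi^{(2)}_g$. Boundedness of $\igen_1\psi^{(2)}_g$ on $\Sp_n$ then follows from the bounds on $\sigma$, $F$, $\Psiz$, the truncation, and the bounded derivatives of $g$; tracking the derivative orders explains why $C^3$ is required, as $\psi^{(1)}_g$ involves $\nabla g$, $\psi^{(2)}_g$ involves $\nabla^2 g$, and $\igen_1\psi^{(2)}_g$ involves $\nabla^3 g$.

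The main obstacle will be the $\tau$-direction: the semigroup of $\igen_0$ contains the pure translation $\tau\mapsto\tau+s$, which by itself produces no averaging and no decay, so the convergence of the correctors and the existence of the long-time average defining $\igen^{\dag}_n g$ must be obtained by carefully combining three distinct mechanisms: exponential mixing of $\xi$ (supplying mean zero of $\sigma$), exponential decay of $\Th(s)y$ on $\Qh$ (removing the $y$-dependence at large $s$), and the boundedness and almost-periodicity of the rotational factors $e^{\pm B\tau}$ (so that the $\tau$-averages in $\igen^{\dag}_n g$ make sense). Verifying that these three ingredients combine into absolutely convergent time integrals, uniformly in $(\zedx,\tau,y,\xi)\in\Sp_n$, is where the calculation will require the most care.
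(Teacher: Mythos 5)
Your overall strategy (perturbed test functions, the two Poisson equations, the semigroup representation $e^{s\igen_0}$, and the identification of $\igen^{\dag}_n g$ as the $\xi$-, $y$-, $\tau$-average of $\igen_1\psi^{(1)}_g$) is the same as the paper's, and your construction and boundedness argument for $\psi^{(1)}_g$ is essentially identical to \eqref{eq:correctorpsi1eq}. The gap is in the second corrector. You claim that, once $\igen^{\dag}_n g$ is subtracted, ``the same integral formula produces a bounded $\psi^{(2)}_g$,'' i.e.\ $\psi^{(2)}_g=\int_0^\infty e^{s\igen_0}\bigl(\igen_1\psi^{(1)}_g-\igen^{\dag}_n g\bigr)\,ds$. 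This integral does not converge, not even conditionally: along the $\igen_0$-flow the $\xi$-fluctuation and the $y$-dependence do decay exponentially (ergodicity of $\xi$ and \eqref{eq:expdecesti_befext_aft}), but after those decay one is left with the function $\yavg\xiavg(\igen_1\psi^{(1)}_g)(\zedx,\tau+s)-\tauavg\yavg\xiavg(\igen_1\psi^{(1)}_g)(\zedx)$, which is periodic in $\tau+s$ with mean zero over a period but does not tend to zero as $s\to\infty$; its integral over $[0,T]$ equals $H(\tau+T)-H(\tau)$ for a periodic antiderivative $H$ and therefore oscillates without a limit. So centering by the single constant $\igen^{\dag}_n g$ is not enough to make your formula well defined — exactly the $\tau$-obstacle you flag at the end, but your proposal leaves it unresolved rather than noting that the single-formula construction must be abandoned there.

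The paper's fix is a three-stage centering, one stage per variable, and the $\tau$-stage is \emph{not} an infinite-horizon integral: set $\psi^{(2,a)}_g=\int_0^\infty e^{s\igen_0}\bigl(\igen_1\psi^{(1)}_g-\xiavg(\igen_1\psi^{(1)}_g)\bigr)ds$ (decay supplied by exponential ergodicity of $\xi$), then $\psi^{(2,b)}_g=\int_0^\infty e^{s\igen_0}\bigl(\xiavg(\igen_1\psi^{(1)}_g)-\yavg\xiavg(\igen_1\psi^{(1)}_g)\bigr)ds$ (decay supplied by the bounded derivatives of $F$ together with $\|\Th(s)y\|\le Ke^{-\kappa s}\|y\|$ on $\Qh$), and finally
\begin{equation*}
\psi^{(2,c)}_g(\zedx,\tau)\,=\,-\int_0^{\tau}\Bigl(\yavg\xiavg(\igen_1\psi^{(1)}_g)-\tauavg\yavg\xiavg(\igen_1\psi^{(1)}_g)\Bigr)\Big|_{(\zedx,s)}\,ds,
\end{equation*}
which is a \emph{finite} integral in $\tau$ of a periodic mean-zero function, hence bounded, and solves the remaining equation $\partial_\tau\psi^{(2,c)}_g=-\bigl(\yavg\xiavg(\igen_1\psi^{(1)}_g)-\tauavg\yavg\xiavg(\igen_1\psi^{(1)}_g)\bigr)$. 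Summing, $\psi^{(2)}_g=\psi^{(2,a)}_g+\psi^{(2,b)}_g+\psi^{(2,c)}_g$ solves $\igen_0\psi^{(2)}_g+\igen_1\psi^{(1)}_g-\igen^{\dag}_n g=0$ with all three pieces bounded on $\Sp_n$. Your Ces\`aro definition of $\igen^{\dag}_n g$ coincides with $\tauavg\yavg\xiavg(\igen_1\psi^{(1)}_g)$ by periodicity, and your accounting of why three bounded derivatives of $g$ are needed is correct; the missing ingredient is precisely this variable-by-variable construction of $\psi^{(2)}_g$.
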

\begin{proof}
First we set-up some notation. For functions of $(\zedx,\tau,y,\xi)$, define $$(e^{t\igen_0}f)(\zedx,\tau,y,\xi):=\int_{\nstsp}f(\zedx,\tau+t,\Th(t)y,\zeta)\nu(t,\xi,d\zeta).$$
For functions of $(\zedx,\tau,y,\xi)$, define operator $\xiavg$ by $$[\xiavg f](\zedx,\tau,y)=\int_{\nstsp}f(\zedx,\tau,y,\xi)\bar{\nu}(d\xi).$$
For functions of $(\zedx,\tau,y)$, define operator $\yavg$ by $$[\yavg f](\zedx,\tau)=f(\zedx,\tau,0).$$
For functions of $(\zedx,\tau)$, define operator $\tauavg$ by $$[\tauavg f](\zedx)=\frac{1}{2\pi/\om_c}\int_0^{2\pi/\om_c}f(\zedx,\tau)d\tau.$$

Now, expanding LHS of \eqref{eq:thm:apprxofgenacttestfunc}, and noting that $\igen_0g=0$ and $\igen_{1,2}g=0$, we get
\begin{align}\label{eq:expand_pow_of_eps_polargenonperttest}
\frac{1}{\eps}\bigg(\igen_0\psi^{(1)}_g \,+\,\igen_{1,1}g\bigg)\,+\,\bigg(\igen_0\psi^{(2)}_g \,+\,\igen_{1}\psi^{(1)}_g\bigg)\,+\,\eps\bigg(\igen_1\psi^{(2)}_g \bigg).
\end{align}
The function 
\begin{align}\label{eq:correctorpsi1eq}
\psi^{(1)}_g=\int_0^\infty e^{s\igen_0}\igen_{1,1}g\,ds
\end{align}
solves $\igen_0\psi^{(1)}_g \,+\,\igen_{1,1}g=0$. The integral from $0$ to $\infty$ is well-defined because (i) by mean-zero and bounded nature of $\sigma$ and assumption \ref{assmp:assmp_on_noise} we have the exponential decy: 
$$\bigg|\int_{\nstsp}\sigma(\zeta)\nu(t,\xi,d\zeta)\bigg| = \bigg|\int_{\nstsp}\sigma(\zeta)(\nu(t,\xi,d\zeta)-\bar{\nu}(d\zeta))\bigg| \leq |\sigma|\sup_{\zeta\in \nstsp}\int_{\nstsp}|\nu(t,\xi,d\zeta)-\bar{\nu}(d\zeta)| \leq |\sigma|c_1e^{-c_2t},$$
and (ii) $\igen_{1,1}g$ is bounded on $\Sp_n$ due to atmost linear growth of $F$.

Now we try to find $\psi^{(2)}_g$. However, note that $\igen_{1}\psi^{(1)}_g$ is not mean-zero. In order to have an exponential decay, we choose $\psi^{(2,a)}_g=\int_0^\infty e^{s\igen_0}\left(\igen_{1}\psi^{(1)}_g-\xiavg(\igen_{1}\psi^{(1)}_g)\right)ds$. Then $\psi^{(2,a)}_g$ solves $\igen_0\psi^{(2,a)}_g + \igen_{1}\psi^{(1)}_g-\xiavg(\igen_{1}\psi^{(1)}_g) = 0$. We then choose $\psi^{(2,b)}_g=\int_0^\infty e^{s\igen_0}\left(\Xi(\igen_{1}\psi^{(1)}_g)-\yavg\xiavg(\igen_{1}\psi^{(1)}_g)\right)ds$. This time, the exponential decay would be provided by bounded derivatives of $F$ and the decay at \eqref{eq:expdecesti_befext_aft}. Then $\psi^{(2,b)}_g$ solves $\igen_0\psi^{(2,b)}_g + \xiavg(\igen_{1}\psi^{(1)}_g)-\yavg\xiavg(\igen_{1}\psi^{(1)}_g) = 0$.  We then choose $\psi^{(2,c)}_g(\zedx,\tau)=-\int_0^{\tau} \left(\yavg\Xi(\igen_{1}\psi^{(1)}_g)-\tauavg\yavg\xiavg(\igen_{1}\psi^{(1)}_g)\right)|_{(\zedx,s)}ds$. The integrand here  is bounded and is periodic in $\tau$ with average zero and so $\psi^{(2,c)}_g$ is bounded. Now, $\psi^{(2,c)}_g$ solves $\igen_0\psi^{(2,c)}_g + \yavg\xiavg(\igen_{1}\psi^{(1)}_g)-\tauavg\yavg\xiavg(\igen_{1}\psi^{(1)}_g) = 0$. Let 
\begin{align}\label{eq:correctorpsi2eq_psi21psi22}
\psi^{(2)}_g \,\,:=\,\,\psi^{(2,a)}_g +\psi^{(2,b)}_g +\psi^{(2,c)}_g.
\end{align}
Then $\psi^{(2)}_g$ solves $\igen_0\psi^{(2)}_g + \igen_{1}\psi^{(1)}_g-\tauavg\yavg\xiavg(\igen_{1}\psi^{(1)}_g) = 0$.

Define $\igen^\dag_n$ by 
\begin{align}\label{eq:def:avggendef}
\igen^\dag_n g \,\,=\,\,\tauavg\yavg\xiavg(\igen_{1}\psi^{(1)}_g).
\end{align}
Collecting all the above, we have \eqref{eq:thm:apprxofgenacttestfunc}. Bounded derivatives of $F$ and $g$ ensure that the correctors are bounded on $\Sp_n$ and that $\igen_1\psi^{(2)}_g$ is bounded on $\Sp_n$.
\end{proof}

\begin{rmk}\label{rmk:drifdiffcoeffLndag}
The generator $\igen^\dag_n$ can be explicitly written in the following form:
\begin{align}
\igen^\dag_n = \sum_{i=1}^2b^{(n)}_i(\zedx)\frac{\partial}{\partial \zedx_i} + \frac12\sum_{i,j=1}^2a^{(n)}_{ij}(\zedx)\frac{\partial^2}{\partial \zedx_i\partial \zedx_j}
\end{align}
where $a^{(n)}, b^{(n)}$ are as written as follows in terms of the auto-correlation  function $R$ of the noise:
\begin{align}\label{def:autocorrNoiseExpErgNoi}
R(t):=\int_{\nstsp}\sigma(\xi)\left(\int_{\nstsp}\sigma(\zeta)\nu(t,\xi,d\zeta)\right)\bar{\nu}(d\xi).
\end{align}
Let 
\begin{align}\label{eq:diffcoeffExpErgNoi}
a_{ij}(\zedx) &= \frac{1}{2\pi/\om_c}\int_0^{2\pi/\om_c}d\tau\int_0^\infty R(s) F(\Phi e^{\tau B}\zedx)F(\Phi e^{(\tau+s)B}\zedx)\big(e^{-\tau B}\Psiz\big)_i\big(e^{-(\tau+s)B}\Psiz\big)_j\,ds,
\end{align}
\begin{align*}
b_i(\zedx) \,\,=\,\, b^{F}_i(\zedx) \,\,=\,\, b^{F,P}_i(\zedx) + b^{F,Q}_i(\zedx),
\end{align*}
\begin{align}\label{eq:driftcoeffbFPExpErgNoi}
b^{F,P}_i(\zedx) &= \frac{1}{2\pi/\om_c}\int_0^{2\pi/\om_c}d\tau\int_0^\infty R(s) F(\Phi e^{\tau B}\zedx)\bigg(\big(\Phi e^{sB}\Psiz \big).\nabla\bigg)F(\Phi e^{(\tau+s)B}\zedx)\,\big(e^{-(\tau+s)B}\Psiz\big)_i\,ds,\\ \label{eq:driftcoeffbFQExpErgNoi}
b^{F,Q}_i(\zedx) &= \frac{1}{2\pi/\om_c}\int_0^{2\pi/\om_c}d\tau\int_0^\infty R(s) F(\Phi e^{\tau B}\zedx)\bigg(\big(\Th(s)(I-\pih)\Ind \big).\nabla\bigg)F(\Phi e^{(\tau+s)B}\zedx)\,\big(e^{-(\tau+s)B}\Psiz\big)_i\,ds.
\end{align}
Then, 
\begin{align*}
a^{(n)}_{ij} = \trunc_n^2a_{ij}, \qquad b^{(n)}_i = \trunc_n^2b^{F}_i + b^{(n),F,\trunc}_i,
\end{align*}
where
\begin{align*}
b^{(n),F,\trunc}_i(\zedx) &= \frac{\trunc_n(\zedx)}{2\pi/\om_c}\int_0^{2\pi/\om_c}d\tau\int_0^\infty R(s) F(\Phi e^{\tau B}\zedx)F(\Phi e^{(\tau+s)B}\zedx)\bigg(\big(e^{-\tau B}\Psiz \big).\nablaz\bigg)\trunc_n(\zedx)\,\big(e^{-(\tau+s)B}\Psiz\big)_i\,ds.\\
\end{align*} 
Note that $(a^{(n)},b^{(n)})$ agree with $(a,b)$ on the set $\{\zedx\in \R^2\,:\,||\zedx||_2\leq n\}$. $\hfill \qed$
\end{rmk}

\vspace{10pt}

We assume the $\Sp$-valued processes $(\zedaux{n}_t,\teps_t, y^{\eps,n}_t,\xi^\eps_t)$ are defined on probability triples $(\Om^{\eps,n},\fil^{\eps,n}_t,\mbbP^{\eps,n})$. Let $\mbbP^{\eps}$ be probability law of $(\zed_t,\teps_t, y^{\eps}_t,\xi^\eps_t)$, i.e. without truncation. Also define the following canonical set-up.
\begin{defn}\label{def:canonicalsetupreduced}
Define $\Omega^\dag:=C([0,\infty),\R^2)$ equipped with the metric
$$D(\om,\om')=\sum_{n=1}^\infty \frac{1}{2^n}\frac{\sup_{t\in[0,n]}|\om(t)-\om'(t)|}{1+\sup_{t\in[0,n]}|\om(t)-\om'(t)|}, \qquad \om,\om'\in \Omega^\dag.$$ Define the coordinate functions $\clp_t(\omega)=\omega(t)$ for all $t\geq 0$ and all $\omega \in \Omega^{\dag}$. For each $t\geq 0$, define $\fil^{\dag}_t:=\sigma\{\clp_s;0\leq s\leq t\}$ and define a $\sigma$-algebra on $\Omega^{\dag}$ by $\fil^\dag=\vee_{t\geq 0}\fil^{\dag}_t$. Let $\mathcal{B}$ denote the Borel $\sigma$-algebra on $\Omega^\dag$ and define the induced probabilities
\begin{align}\label{eq:defcanonpepsintermsofp}
\mbbP^{\eps,n,\dag}(A)=\mbbP^{\eps,n}\{\zedaux{n} \in A\}, \qquad \mbbP^{\eps,\dag}(A)=\mbbP^{\eps}\{\zed \in A\}, \qquad  A \in \mathcal{B}.
\end{align}
Let $C_b(\Omega^\dag)$, equipped with sup norm, be the space of bounded continuous functions on $\Omega^\dag$.
Let $\sppm(\Omega^\dag)$  be the space of probability measures on $\Omega^\dag$ equipped with $weak^*$ topology when $\sppm(\Omega^\dag)$ is considered as dual of $C_b(\Omega^\dag)$.
\end{defn}

\begin{rmk}
The metric space $(\Omega^\dag,D)$ is Polish and the convergence induced by the metric $D$ is uniform convergence on compacts. Also, $\fil^\dag=\mathcal{B}$. The topology on $\sppm(\Omega^\dag)$ is same as the one induced by the Prohorov metric. See for example \cite{MDP}. $\hfill \qed$
\end{rmk}

\begin{thm}\label{thm:main}
Let $\mbbP^{n,\dag}$ be the unique solution of the martingale problem for $\igen^\dag_n$, with initial condition $\zedx_0$ such that $\Phi \zedx_0=\pj_0x$. 
As $\eps \to 0$, the measures $\mbbP^{\eps,n,\dag}$ tend to $\mbbP^{n,\dag}$. 
\end{thm}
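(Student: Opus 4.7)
The plan is the standard three-step martingale-problem programme (cf.\ chapter 11 of \cite{MDP} and \cite{Duke}): (i) show tightness of $\{\mbbP^{\eps,n,\dag}\}_{\eps>0}$ in $\sppm(\Om^\dag)$, (ii) show every weak limit solves the martingale problem for $\igen^\dag_n$ with the correct initial condition, and (iii) conclude by the uniqueness hypothesised in the statement. Throughout, the workhorse is Theorem \ref{thm:apprxofgenacttestfunc}: for $g\in C^3_c(\R^2;\R)$, setting $g^\eps := g+\eps\psi^{(1)}_g+\eps^2\psi^{(2)}_g$, the process
$$M^{\eps,g}_t \;:=\; g^\eps(\zedaux{n}_t,\teps_t,y^{\eps,n}_t,\xi^\eps_t) - g^\eps(\zedaux{n}_0,0,y^\eps_0,\xi^\eps_0) - \int_0^t\!\big(\igen^\dag_n g + \eps\,\igen_1\psi^{(2)}_g\big)(\zedaux{n}_s,\teps_s,y^{\eps,n}_s,\xi^\eps_s)\,ds$$
is a $\mbbP^{\eps,n}$-martingale, with the correctors $\psi^{(k)}_g$ and the remainder $\igen_1\psi^{(2)}_g$ uniformly bounded on $\Sp_n$ (the last sentence of the theorem).

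\textbf{Tightness.} By \eqref{eq:ztruncbound} the one-dimensional laws of $\zedaux{n}$ are supported in the compact ball of radius $n+1$ in $\R^2$. For the modulus-of-continuity condition, apply the martingale identity above with $g \in C^3_c(\R^2;\R)$ that coincides with a coordinate projection on $\{\|\zedx\|_2 \leq n+1\}$. Using the $\Sp_n$-uniform bounds on $\psi^{(k)}_g$, $\igen^\dag_n g$ and $\igen_1\psi^{(2)}_g$, together with optional sampling, one obtains
$$\expt\bigl[\,|g(\zedaux{n}_{\tau+h})-g(\zedaux{n}_\tau)|\;\big|\;\fil^{\eps,n}_\tau\bigr] \;\leq\; C(h+\eps)$$
for every $(\fil^{\eps,n}_t)$-stopping time $\tau \leq T$ and every $h \in (0,T)$. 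This is Aldous's condition, hence $\{\mbbP^{\eps,n,\dag}\}_{\eps>0}$ is tight in $\sppm(\Om^\dag)$.

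\textbf{Identification.} Let $\mbbP^*$ be any weak limit point along a subsequence $\eps_k\downarrow 0$. Pick $g\in C^3_c(\R^2;\R)$, times $0\leq s<t$, and a bounded continuous $\fil^\dag_s$-measurable $H:\Om^\dag\to\R$. Taking expectations of $M^{\eps,g}_t - M^{\eps,g}_s$ against $H$ and using the boundedness of the correctors and of $\igen_1\psi^{(2)}_g$ gives
$$\expt^{\mbbP^{\eps,n,\dag}}\!\!\left[H\cdot\left(g(\clp_t)-g(\clp_s)-\int_s^t \igen^\dag_n g(\clp_u)\,du\right)\right] \;=\; O(\eps),$$
since the corrector endpoints contribute $O(\eps)$ and the extra integrand $\eps\,\igen_1\psi^{(2)}_g$ contributes $O(\eps)$ as well. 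The functional inside the brackets is bounded and continuous on $\Om^\dag$ (because $g$ is $C^3_c$ and $\igen^\dag_n g$ is continuous and bounded by Remark \ref{rmk:drifdiffcoeffLndag}), so passing to the limit along $\eps_k$ yields
$$\expt^{\mbbP^*}\!\!\left[H\cdot\left(g(\clp_t)-g(\clp_s)-\int_s^t \igen^\dag_n g(\clp_u)\,du\right)\right] = 0.$$
Hence $\mbbP^*$ solves the martingale problem for $\igen^\dag_n$; its one-dimensional marginal at $t=0$ is $\delta_{\zedx_0}$ because the initial condition of $\zedaux{n}$ is deterministic. By the uniqueness assumed in the statement, $\mbbP^*=\mbbP^{n,\dag}$, and since this identifies every subsequential limit, the entire family $\mbbP^{\eps,n,\dag}$ converges weakly to $\mbbP^{n,\dag}$.

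\textbf{Main obstacle.} The genuinely delicate step is the $O(\eps)$ error control in the identification, which reduces to the uniform-on-$\Sp_n$ bounds on the correctors and on $\igen_1\psi^{(2)}_g$; these are precisely what Theorem \ref{thm:apprxofgenacttestfunc} delivers (relying on Assumption \ref{assmp:assmp_on_FGGq}, the exponential mixing of $\xi$, and the decay \eqref{eq:expdecesti_befext_aft} on $\Qh$). Granted those estimates, everything else is standard Prohorov-style bookkeeping.
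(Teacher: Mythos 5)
Your overall strategy is the same as the paper's: use the perturbed test function $g^\eps=g+\eps\psi^{(1)}_g+\eps^2\psi^{(2)}_g$ from Theorem \ref{thm:apprxofgenacttestfunc} to get an approximate martingale, prove tightness, identify every limit point as a solution of the martingale problem for $\igen^\dag_n$ by integrating against bounded continuous past-measurable functionals, and invoke uniqueness. The identification step and the conclusion are fine and essentially coincide with the paper's argument (the paper additionally verifies well-posedness of the $\igen^\dag_n$-martingale problem via Corollary 6.3.3 of \cite{MDP}, which you take as hypothesis).

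The gap is in your tightness step. The bound
$\expt\bigl[\,|g(\zedaux{n}_{\tau+h})-g(\zedaux{n}_\tau)|\;\big|\;\fil^{\eps,n}_\tau\bigr]\leq C(h+\eps)$
does not follow from boundedness of $\psi^{(k)}_g$, $\igen^\dag_n g$ and $\igen_1\psi^{(2)}_g$ together with optional sampling: those ingredients control the drift and corrector terms in the decomposition \eqref{eq:martpropforgeps}, and optional sampling only tells you that the \emph{signed} martingale increment has conditional mean zero. It says nothing about $\expt[|M^{g^\eps}_{\tau+h}-M^{g^\eps}_\tau|\,|\,\fil^{\eps,n}_\tau]$, which is the dominant term and is generically of order $\sqrt{h}$, not $h$. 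To control it you need a second-moment estimate via the predictable quadratic variation, i.e.\ the boundedness on $\Sp_n$ of $H_{g^\eps}=\igen^\eps (g^\eps)^2-2g^\eps\igen^\eps g^\eps$; this is exactly the computation the paper carries out (only the noise generator $\noisegent$ contributes, acting on the correctors, since the $\nablaz$, $\nablay$, $\partial_\tau$ parts are first order, and the result is $O(1)$ in $\eps$), and it is the ingredient your argument is missing. Once you have it, Cauchy--Schwarz gives a conditional bound of order $\sqrt{h}+h+\eps$, which still verifies Aldous's criterion (the paper instead bounds the conditional second moment of $\|\zedaux{n}_t-\zedaux{n}_s\|$ directly, following \cite{Duke}), so the proof is repairable; but as written the key estimate is asserted without the computation that makes it true.
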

\begin{proof}
We follow the approach of \cite{Duke} (also see \cite{Kushbook}).

Three bounded derivatives of $F$ ensure that the coefficients $(a^{(n)},b^{(n)})$ defined in remark \ref{rmk:drifdiffcoeffLndag} have two bounded derivatives. Further, $(a^{(n)},b^{(n)})$ are bounded on $\R^2$. By corollary 6.3.3 of \cite{MDP} the solution of martingale problem for $\igen^\dag_n$ is well-posed. In particular, the solution of martingale problem for $\igen^\dag_n$ with initial condition $\zedx_0$ exists and is unique.

Let $g^{\eps}:\Sp_n \to \R$ be such that it is bounded and has continuous bounded derivatives with respect to $\zedx$, $\tau$ and $y$ (in Frechet sense) and lies in the domain of $\noisegent$. Define
\begin{equation}\label{defMtgeps}
M^{g^{\eps}}_t:=g^{\eps}(\zedaux{n}_t,\teps_t, y^{\eps,n}_t,\xi^\eps_t)-g^{\eps}(\zedaux{n}_0,\teps_0, y^{\eps,n}_0,\xi^\eps_0)-\int_0^t(\igen^{\eps}g^{\eps})(\zedaux{n}_s,\teps_s, y^{\eps,n}_s,\xi^\eps_s)ds,
\end{equation}
then $M^{g^{\eps}}_t$ is a martingale with respect to the filtration $\fil^{\eps,n}_t$.

For  {$g \in C^3(\R^2;\R)$} bounded with bounded derivatives, let $g^\eps=g + \eps \psi^{(1)}_g + \eps^2 \psi^{(2)}_g$ where  $\psi^{(k)}_g$ are ``correctors'' as given in theorem \ref{thm:apprxofgenacttestfunc}. 
Applying \eqref{defMtgeps} to this $g^\eps$ and using equation \eqref{eq:thm:apprxofgenacttestfunc} we have
\begin{align}\label{eq:martpropforgeps}
g(\zedaux{n}_t)-g(\zedaux{n}_s)-\int_s^t(\igen^{\dag}_ng)(\zedaux{n}_u)du\,&=\,\big(M^{g^{\eps}}_t-M^{g^{\eps}}_s\big)\\ \notag %\label{eq:martpropforgeps_t2} 
&\qquad +\eps \int_s^t(\igen^{\eps}\psi^{(2)}_g)(\zedaux{n}_u,\teps_u, y^{\eps,n}_u,\xi^\eps_u)du \\ \notag %\label{eq:martpropforgeps_t3}
& \qquad - \sum_{k=1}^2 \eps^k\big(\psi^{(k)}_g(\zedaux{n}_t,\teps_t, y^{\eps,n}_t,\xi^\eps_t)-\psi^{(k)}_g(\zedaux{n}_s,\teps_s, y^{\eps,n}_s,\xi^\eps_s)\big),
\end{align}
where $M^{g^{\eps}}_t$ is a $\fil^{\eps,n}_t$ martingale. By theorem  \ref{thm:apprxofgenacttestfunc}, $\psi^{(k)}_g$ and $\igen^{\eps}\psi^{(2)}_g$ are bounded. Hence we have 
\begin{equation}\label{eq:useful2tightnessaux}
|g(\zedaux{n}_t)-g(\zedaux{n}_s)| \leq |\int_{s}^{t}\igen^\dag_n g(\zedaux{n}_u)du| + |M^{g^\eps}_t-M^{g^\eps}_s|+\eps C_1+\eps C_2|t-s|+\eps^2 C_3.
\end{equation}
Let $H_{g^{\eps}}(\zedx,\tau,y,\xi)=\left(\igen^{\eps}(g^{\eps})^2-2g^{\eps}\igen^{\eps}g^{\eps}\right)(\zedx,\tau,y,\xi)$. Then $\la {M}^{g^\eps} \ra_t= \int_0^t H_{g^{\eps}}(\zedaux{n}_s,\teps_s,y^{\eps,n}_s,\xi^\eps_s)ds$. Direct computation yields 
\begin{align*}
H_{g^{\eps}}&=[\noisegent(\psi^{(1)}_g)^2-2\psi^{(1)}_g\noisegent\psi^{(1)}_g] \\
 &+2\eps[\noisegent(\psi^{(1)}_g\psi^{(2)}_g)-\psi^{(1)}_g\noisegent\psi^{(2)}_g-\psi^{(2)}_g\noisegent\psi^{(1)}_g] + \eps^2[\noisegent(\psi^{(2)}_g)^2-2\psi^{(2)}_g\noisegent\psi^{(2)}_g]
\end{align*}
showing that $H_{g^{\eps}}$ is a bounded function. Write the inequality \eqref{eq:useful2tightnessaux} for $g(\zedx)=(\zedx)_1$ and $g(\zedx)=(\zedx)_2$.   
Squaring both the inequalities and adding them and taking expectations, then using
$$\expt^{\eps,n}[|M^{g^\eps}_t-M^{g^\eps}_s|^2\,|\,\fil^{\eps,n}_s]=\expt^{\eps,n}[\la M^{g^\eps}\ra_t -\la M^{g^\eps}\ra_s| \fil^{\eps,n}_s]=\int_s^t\expt^{\eps,n}[H_{g^{\eps}}(\zedaux{n}_u,\teps_u,y^{\eps,n}_u,\xi^\eps_u)| \fil^{\eps,n}_s] du$$
together with the fact that $H_{g^{\eps}}$ is bounded, it can be shown that
$$\varlimsup_{\delta \downarrow 0}\,\, \varlimsup_{\eps\downarrow 0}\,\,\sup_{|t-s|\leq\delta}\,\,\sup \expt^{\eps,n}\left[||\zedaux{n}_t-\zedaux{n}_s||^2 \,|\,\fil^{\eps,n}_s\right]=0$$
where the sup next to the expectation is over the past up to time $s$.
This proves that the family $\mbbP^{\eps,n,\dag}$ is relatively compact (see pages 15-17 of \cite{Duke}). Hence $\mbbP^{\eps,n,\dag}$ have atleast one cluster point.

Multiply equation \eqref{eq:martpropforgeps} by any continuous functional $\Theta_{s}$ of $\zedaux{n}$ which is $\fil^{\eps,n}_s$ measurable, take expectation and passing to limits we obtain
$$\lim_{\eps\downarrow 0}\expt^{\eps,n} \left[\left(g(\zedaux{n}_t)-g(\zedaux{n}_s)-\int_{s}^{t}\igen^\dag_n g(\zedaux{n}_u)du\right)\Theta_{s}\right]=0.$$
Then, it follows that any cluster point of $\mbbP^{\eps,n,\dag}$ solves the martingale problem associated with $\igen^\dag_n$. Since the solution of martingale problem for $\igen^\dag_n$ is unique we have the stated result.
\end{proof}

\begin{thm}\label{thm:main2}
Let $(a,b)$ be as defined remark \ref{rmk:drifdiffcoeffLndag} and let 
\begin{align}\label{eq:infgenLimitEpsZed}
\igen^\dag = \sum_{i=1}^2b_i(\zedx)\frac{\partial}{\partial \zedx_i} + \frac12\sum_{i,j=1}^2a_{ij}(\zedx)\frac{\partial^2}{\partial \zedx_i\partial \zedx_j}.
\end{align}
Let $\mbbP^{\dag}$ be the unique solution of the martingale problem for $\igen^\dag$, with initial condition $\zedx_0$ such that $\Phi \zedx_0=\pj_0x$. 
As $\eps \to 0$, the measures $\mbbP^{\eps,\dag}$ tend to $\mbbP^{\dag}$. 
\end{thm}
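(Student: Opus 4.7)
The plan is to lift the truncated-problem convergence of Theorem \ref{thm:main} to the full process $\zed$ by the Stroock--Varadhan stopping-time localization scheme announced in the paragraph preceding \eqref{eq:proj_eqns_pert_crit_alt_trunc}, namely Corollary 10.1.2 of \cite{MDP} glued by Lemma 11.1.1 of \cite{MDP}. Two path-level identifications form the scaffolding. On the prelimit side, \eqref{eq:proj_eqns_pert_crit_alt} and \eqref{eq:proj_eqns_pert_crit_alt_trunc} produce identical trajectories until the first exit time $\stopt_n := \inf\{t\ge 0:\|\zed_t\|_2\ge n\}$ of $\zed$ from $B_n:=\{\zedx\in\R^2:\|\zedx\|_2\le n\}$, so $\mbbP^{\eps,\dag}$ and $\mbbP^{\eps,n,\dag}$ coincide on $\fil^{\dag}_{\stopt_n}$. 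On the limit side, Remark \ref{rmk:drifdiffcoeffLndag} gives $(a^{(n)},b^{(n)})=(a,b)$ on $B_n$, so Lemma 11.1.1 of \cite{MDP} forces $\mbbP^{n,\dag}=\mbbP^\dag$ on $\fil^{\dag}_{\stopt^\dag_n}$, where $\stopt^\dag_n$ is the canonical exit time from $B_n$.

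A preliminary step is well-posedness of the martingale problem for $\igen^\dag$. The three bounded derivatives of $F$ (Assumption \ref{assmp:assmp_on_FGGq}) render $(a,b)$ of class $C^2$ with at most quadratic growth in $\zedx$, so Corollary 6.3.3 of \cite{MDP} yields a unique solution $\mbbP^{n,\dag}$ of the truncated problem for every $n$. A Lyapunov argument with $V(\zedx)=\log(1+\|\zedx\|_2^2)$, against which $\igen^\dag V$ is bounded on $\R^2$ because the logarithm tames the quadratic growth of $(a,b)$, shows $\stopt^\dag_n\uparrow\infty$ almost surely under any consistent patching. Lemma 11.1.1 of \cite{MDP} then assembles the $\mbbP^{n,\dag}$ into a unique non-explosive $\mbbP^\dag$ solving the martingale problem for $\igen^\dag$.

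With these ingredients in place, Corollary 10.1.2 of \cite{MDP} extracts $\mbbP^{\eps,\dag}\Rightarrow \mbbP^\dag$ from the fixed-$n$ convergence $\mbbP^{\eps,n,\dag}\Rightarrow\mbbP^{n,\dag}$ of Theorem \ref{thm:main}, provided the uniform tightness-at-infinity input
$$\lim_{n\to\infty}\limsup_{\eps\downarrow 0}\,\mbbP^{\eps}\{\stopt_n\le T\}\;=\;0\qquad\text{for every }T>0$$
holds. This $\eps$-uniform exit estimate is the main obstacle. The natural route is to feed $g=V=\log(1+\|\zedx\|_2^2)$ (smoothly truncated at radius $m\gg n$ so as to lie in the admissible class of Theorem \ref{thm:apprxofgenacttestfunc}) into the martingale decomposition \eqref{eq:martpropforgeps}: the averaged drift $\igen^{\dag}_m V$ stays bounded uniformly in $m$ by the same logarithmic-taming argument, and the correctors $\eps\psi^{(k)}_V$ together with the quadratic-variation integrand $H_{V^\eps}$ remain $O(1)$ on $\Sp_m$. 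Doob's submartingale inequality then controls $\expt^{\eps,m}\sup_{t\le T} V(\zedaux{m}_t)$ uniformly in $(\eps,m)$, and the Chebyshev-type bound
$$\mbbP^{\eps}\{\stopt_n\le T\}\;\le\; V(n)^{-1}\sup_{\eps,\,m\ge n}\expt^{\eps,m}\!\!\sup_{t\le T}V(\zedaux{m}_t)$$
delivers the required decay, closing the localization and hence the proof.
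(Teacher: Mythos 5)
Your scaffolding---truncate, apply Theorem \ref{thm:main} for each fixed $n$, identify $\mbbP^{\eps,\dag}$ with $\mbbP^{\eps,n,\dag}$ and $\mbbP^{\dag}$ with $\mbbP^{n,\dag}$ on $\fil^\dag_{\stopt_n}$, and glue---is exactly the paper's route. Two remarks on the periphery: the paper obtains well-posedness of the limit martingale problem in one stroke from Theorem 10.2.2 of \cite{MDP} (at most linear growth and bounded derivatives of $F$), so your Lyapunov-plus-patching construction of $\mbbP^\dag$, while fine in spirit, is more work than needed; and you have the roles of the two cited results interchanged in places: Corollary 10.1.2 is the local-agreement statement (it gives $\mbbP^{\dag}=\mbbP^{n,\dag}$ on $\fil^\dag_{\stopt_n}$ because $(a,b)=(a^{(n)},b^{(n)})$ on the ball of radius $n$), while Lemma 11.1.1 is the weak-convergence gluing lemma; neither of them ``assembles'' localized solutions into a global one---that is Chapter 10 material, subsumed here by Theorem 10.2.2.

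The genuine problem is the step you single out as the main obstacle. First, it is not needed: Lemma 11.1.1 of \cite{MDP} does not take the uniform estimate $\lim_{n}\limsup_{\eps\downarrow 0}\mbbP^{\eps}\{\stopt_n\le T\}=0$ as an input. Its hypotheses are precisely the facts you already have---the identifications on $\fil^\dag_{\stopt_n}$, the fixed-$n$ convergence $\mbbP^{\eps,n,\dag}\Rightarrow\mbbP^{n,\dag}$, and the fact that $\mbbP^{\dag}$ is a probability measure on $C([0,\infty);\R^2)$, so that $\mbbP^{\dag}\{\stopt_n\le T\}\to 0$ as $n\to\infty$; the needed control of $\mbbP^{\eps}\{\stopt_n\le T\}=\mbbP^{\eps,n,\dag}\{\stopt_n\le T\}$ then falls out of the fixed-$n$ weak convergence itself. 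Second, the argument you sketch for that estimate does not hold as stated: the boundedness in Theorem \ref{thm:apprxofgenacttestfunc} of the correctors $\psi^{(k)}_g$ and of $H_{g^{\eps}}$ is boundedness on $\Sp_m$ with constants that grow with $m$, since on $\Sp_m$ the $y$-variable ranges over a ball of radius $Cm$ and the linear growth of $F$ in $\|y\|$ is not compensated by the decay of the derivatives of $V=\log(1+\|\zedx\|_2^2)$, which only tames the $\zedx$-growth; so the claim that the correctors and $H_{V^{\eps}}$ remain $O(1)$ on $\Sp_m$ uniformly in $m$ is false as written. It could be repaired by estimating along trajectories, using \eqref{eq:ytruncbound} to keep $\|y^{\eps,m}_t\|$ of order $\|y_0\|+\eps\,m$ and exploiting the cancellation between the growth of $F$ in $\zedx$ and the decay of $\nabla^{\mathfrak z} V$, but that is a nontrivial extra argument which you neither supply nor need: drop the whole step and invoke Lemma 11.1.1 directly, as the paper does.
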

\begin{proof}
Atmost linear growth and bounded derivatives of $F$ ensure that the conditions of Theorem 10.2.2 of \cite{MDP} are satisfied. By that theorem, the martingale problem for $\igen^\dag$ with initial condition $\zedx_0$ has unique solution $\mbbP^{\dag}$. 

Define the stopping time $\stopt_n(\om):=\inf\{t\geq 0\,:\,||\om(t)||\geq n\}$ for $\om \in \Om^\dag$. Then, by corollary 10.1.2 in \cite{MDP}, $\mbbP^{\dag}$ agrees with $\mbbP^{n,\dag}$ on $\fil^\dag_{\stopt_n}$. Also, $\mbbP^{\eps,\dag}$ agrees with $\mbbP^{\eps, n,\dag}$ on $\fil^\dag_{\stopt_n}$. By theorem \ref{thm:main} we have $\mbbP^{\eps,n,\dag}\xrightarrow{\eps\to 0} \mbbP^{n,\dag}$, and so by lemma 11.1.1 in \cite{MDP}, $\mbbP^{\eps,\dag}\xrightarrow{\eps\to 0} \mbbP^{\dag}$.
\end{proof}

%#####################################################################
%############################ SECTION 5 ##############################
%#####################################################################
%#####################################################################

\section{Effect of the deterministic perturbations $G$ and $G_q$}\label{sec:GGqExpErgNoi}
Following the same route as in proof of theorem \ref{thm:apprxofgenacttestfunc} we find that the effect of $G$ is to add one more drift term $b^{G}$ to the drift coefficient $b$ of $\igen^{\dag}$. The coefficient $b^{G}$ is given by
\begin{align}\label{eq:diftofGExpErgNoi}
b^{G}_i(\zedx) = \frac{1}{2\pi/\om_c}\int_0^{2\pi/\om_c}d\tau\, G(\Phi e^{B\tau}\zedx) \big(e^{-\tau B}\Psiz\big)_i.
\end{align}

\begin{assumpt}\label{ass:assumptonGqcentering}
We assume that $G_q$ satisfies the following centering condition.
$$\frac{1}{2\pi/\om_c}\int_0^{2\pi/\om_c}d\tau\, G_q(\Phi e^{B\tau}\zedx) \big(e^{-\tau B}\Psiz\big)_i \,\,=\,\,0, \qquad i=1,2.$$
\end{assumpt}
Following the same route as in proof of theorem \ref{thm:apprxofgenacttestfunc} we find that the effect of $G_q$ is to add one more drift term $b^{G_q}$ to the drift coefficient $b$ of $\igen^{\dag}$. The coefficient $b^{G_q}$ is given by 
\begin{align}\label{eq:diftofGqExpErgNoi}
b^{G_q}=b^{G_q,P}+b^{G_q,Q},
\end{align}
\begin{align*}
b^{G_q,P}_i(\zedx) &= \frac{1}{2\pi/\om_c}\int_0^{2\pi/\om_c}d\tau\int_{\tau}^{2\pi/\om_c}  G_q(\Phi e^{\tau B}\zedx)\bigg(\big(\Phi e^{(u-\tau)B}\Psiz \big).\nabla\bigg)G_q(\Phi e^{u B}\zedx)\,\big(e^{-u B}\Psiz\big)_i\,du,\\
b^{G_q,Q}_i(\zedx) &= \frac{1}{2\pi/\om_c}\int_0^{2\pi/\om_c}d\tau\int_0^\infty  G_q(\Phi e^{\tau B}\zedx)\bigg(\big(\Th(s)(I-\pih)\Ind \big).\nabla\bigg)G_q(\Phi e^{(\tau+s)B}\zedx)\,\big(e^{-(\tau+s)B}\Psiz\big)_i\,ds.
\end{align*}

%#####################################################################
%############################ SECTION 6 ##############################
%#####################################################################
%#####################################################################

\section{Result of this article}\label{sec:MainResultExpErgNoi}

We summarize the result of this paper.

\begin{thm}\label{thm:mainthmExpErgNoimainres}
Let $x$ be as governed by \eqref{eq:detDDE_pert_gennoise_intro}, with coefficients $F, G, G_q$ satisfying assumptions \ref{assmp:assmp_on_FGGq} and \ref{ass:assumptonGqcentering}, and noise $\xi$ satisfying assumption \ref{assmp:assmp_on_noise}. Define $\R^2$-valued process $z$ by $\pi\pj_t x = \Phi z(t)$ where $\pi:\C \to P$ is the projection onto the (critical) subspace $P\subset \C$. Define $\zed_t=z(t/\eps^2)$. Then the probability law of $\zed$ converges as $\eps \to 0$ to the law of diffusion process (with intial condition $\zedx_0=z(0)$) governed by the partial differential operator \eqref{eq:infgenLimitEpsZed} where the diffusion coefficient $a$ is given at \eqref{eq:diffcoeffExpErgNoi} and the drift coefficient $b$ equals 
$$b = b^{F,P}+b^{F,Q}+b^{G}+b^{G_q}, $$
where $b^{F,P}$ and $b^{F,Q}$ are defined at \eqref{eq:driftcoeffbFPExpErgNoi}--\eqref{eq:driftcoeffbFQExpErgNoi} and $b^G$ is defined at \eqref{eq:diftofGExpErgNoi} and $b^{G_q}$ is defined at \eqref{eq:diftofGqExpErgNoi}. The function $R$ that appears in the formulas for $(a,b)$ is the auto-correlation function of the noise. The function $R$ is defined at \eqref{def:autocorrNoiseExpErgNoi}.
\end{thm}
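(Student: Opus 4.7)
The plan is to combine Theorem \ref{thm:main2} with the extra-drift computations of Section \ref{sec:GGqExpErgNoi} into a single corrector/martingale-problem argument. First I would introduce the truncated system \eqref{eq:proj_eqns_pert_crit_alt_trunc}--\eqref{eq:proj_eqns_pert_stable_alt_trunc} but now keeping the full right-hand side of \eqref{eq:detDDE_pert_gennoise_intro}. After the time change $\teps_t=t/\eps^2$, the generator of the $(\zedaux{n}_t,\teps_t,y^{\eps,n}_t,\xi^\eps_t)$-process reads
\begin{equation*}
\igen^\eps \;=\; \tfrac{1}{\eps^2}\igen_0 \;+\;\tfrac{1}{\eps}\bigl(\igen^{F}_{1,1}+\igen^{F}_{1,2}+\igen^{G_q}_{1,1}+\igen^{G_q}_{1,2}\bigr) \;+\;\bigl(\igen^{G}_{1}+\igen^{G}_{2}\bigr),
\end{equation*}
where the scaling comes from the factors $\eps,\eps,\eps^2$ in front of $\sigma F$, $G_q$ and $G$ multiplied by the $\eps^{-2}$ time acceleration, and the labels $1,2$ refer to the $\nablaz$- and $\nablay$-directional components built in analogy with $\igen_{1,1},\igen_{1,2}$ in \eqref{eq:definfgen_Lteps}. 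Note that $\igen^G$ sits at order $\eps^0$ and is therefore a bounded perturbation, while $\igen^{G_q}$ is singular at order $\eps^{-1}$ and requires a corrector.

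Next I would extend the corrector construction of Theorem \ref{thm:apprxofgenacttestfunc} by writing, for $g\in C^3(\R^2;\R)$ with bounded derivatives,
\begin{equation*}
g^\eps \;=\; g + \eps\bigl(\psi^{(1)}_{g,F}+\psi^{(1)}_{g,G_q}\bigr) + \eps^2\bigl(\psi^{(2)}_{g,F}+\psi^{(2)}_{g,G_q}+\psi^{(2)}_{g,\text{cross}}\bigr),
\end{equation*}
and choose each corrector so as to kill all powers of $\eps$ below $\eps^0$ in $\igen^\eps g^\eps$. The $F$-correctors are exactly those of Theorem \ref{thm:apprxofgenacttestfunc}. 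For $G_q$ I would look for a $\xi$-independent $\psi^{(1)}_{g,G_q}$ solving $\partial_\tau\psi^{(1)}_{g,G_q} + (\Ah y).\nablay\psi^{(1)}_{g,G_q} = -\igen^{G_q}_{1,1}g$; the required bounded solution exists precisely because of Assumption \ref{ass:assumptonGqcentering}, which is the $\tau$-average vanishing condition. A $\psi^{(2)}_{g,G_q}$ is then constructed in three steps, exactly as in the proof of Theorem \ref{thm:apprxofgenacttestfunc}: successive subtractions of $\xiavg$, $\yavg$, $\tauavg$, with exponential decay in the $y$-integral coming from \eqref{eq:expdecesti_befext_aft} and periodicity killing the $\tau$-integral. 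An additional cross-corrector is needed to absorb the mixed $\igen_1^F\psi^{(1)}_{g,G_q}+\igen_1^{G_q}\psi^{(1)}_{g,F}$ term. The identity analogous to \eqref{eq:thm:apprxofgenacttestfunc} then becomes
\begin{equation*}
\igen^\eps g^\eps \;=\; \tilde\igen^\dag_n g + \eps R_\eps, \qquad \tilde\igen^\dag_n g \;=\; \tauavg\yavg\xiavg\bigl(\igen_1^F\psi^{(1)}_{g,F}+\igen_1^{G_q}\psi^{(1)}_{g,G_q}+\igen^G g\bigr),
\end{equation*}
with $R_\eps$ bounded on $\Sp_n$. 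A direct computation of the three averages yields exactly the coefficients $(a^{(n)},b^{(n)})$ where $b^{(n)}=\trunc_n^2(b^{F,P}+b^{F,Q}+b^G+b^{G_q})+b^{(n),\trunc}$, i.e. the diffusion operator \eqref{eq:infgenLimitEpsZed} truncated by $\trunc_n$; in particular the $G$-contribution $b^G$ is the $\tau$-average of $\igen^G$ acting on $g$, and $b^{G_q}$ decomposes into $b^{G_q,P}+b^{G_q,Q}$ as in \eqref{eq:diftofGqExpErgNoi}.

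Having established the analogue of Theorem \ref{thm:apprxofgenacttestfunc}, I would now repeat verbatim the martingale-plus-tightness argument of Theorem \ref{thm:main}: the identity
\begin{equation*}
g(\zedaux{n}_t)-g(\zedaux{n}_0)-\int_0^t \tilde\igen^\dag_n g(\zedaux{n}_u)\,du \;=\; M^{g^\eps}_t + O(\eps)
\end{equation*}
gives, via the quadratic variation bound on $M^{g^\eps}$, tightness of $\mbbP^{\eps,n,\dag}$, and any cluster point solves the martingale problem for $\tilde\igen^\dag_n$. Well-posedness of this martingale problem follows from Corollary 6.3.3 of \cite{MDP} using atmost linear growth and bounded derivatives of $F,G,G_q$ (which give bounded first and second derivatives of $a^{(n)},b^{(n)}$ after the truncation). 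Well-posedness of the untruncated martingale problem for $\igen^\dag$ follows from Theorem 10.2.2 of \cite{MDP}. Finally, the localization argument at the end of Theorem \ref{thm:main2}---agreement of $\mbbP^{\dag}$ with $\mbbP^{n,\dag}$ on $\fil^\dag_{\stopt_n}$ via Corollary 10.1.2, and the transfer via Lemma 11.1.1 of \cite{MDP}---yields the convergence $\mbbP^{\eps,\dag}\to\mbbP^\dag$ stated in the theorem.

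The main obstacle is the construction of the $G_q$-corrector: unlike the noise-driven $F$-term, $G_q$ has no stochastic decorrelation mechanism and the only way to keep $\psi^{(1)}_{g,G_q}$ bounded in $\tau$ is to impose the solvability (Fredholm-type) condition of Assumption \ref{ass:assumptonGqcentering}. Writing out the second-order corrector $\psi^{(2)}_{g,G_q}$ and the mixed cross-term carefully---so that the coefficients one extracts by the $\xiavg\yavg\tauavg$ procedure match \eqref{eq:driftcoeffbFPExpErgNoi}--\eqref{eq:driftcoeffbFQExpErgNoi} and \eqref{eq:diftofGqExpErgNoi} rather than some spurious expressions---is the delicate bookkeeping step of the proof.
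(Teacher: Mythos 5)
Your proposal follows essentially the same route as the paper: the paper proves the $G=G_q=0$ case via the truncated corrector/martingale-problem argument (Theorems \ref{thm:apprxofgenacttestfunc}, \ref{thm:main}, \ref{thm:main2}) and then handles $G$ and $G_q$ by exactly the extension you describe, with Assumption \ref{ass:assumptonGqcentering} serving as the solvability condition for the $G_q$-corrector and the localization via Corollary 10.1.2 and Lemma 11.1.1 of \cite{MDP} removing the truncation. In fact your write-up of the $G$, $G_q$ and cross-term bookkeeping (whose averages vanish, leaving only $b^G$ and $b^{G_q}$) is more explicit than the paper's own sketch in Section \ref{sec:GGqExpErgNoi}, so no gap to report.
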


For results regarding vector-valued DDE without proofs, see \cite{PRE}. Usefulness of the above results is also illustrated using numerical simulations in \cite{PRE}. In \cite{PRE} we choose a different basis for $\Phi$ and hence the notation is different from here. We concern with $\mathcal{H}^\eps_t := \frac12||\zed_t||^2$ in \cite{PRE} rather than individual components of $\zed$.

\end{document}